\documentclass[12pt]{article}
\usepackage[UKenglish]{babel}
\usepackage[T1]{fontenc}
\usepackage{lmodern,amsmath,amsthm,amsfonts,amssymb,graphicx,float,microtype,thmtools,underscore,mathtools,thm-restate}
\usepackage[shortlabels]{enumitem}
\setlist[itemize]{topsep=0ex,itemsep=0ex,parsep=0ex}
\setlist[enumerate]{topsep=0ex,itemsep=0ex,parsep=0ex}
\usepackage[usenames,dvipsnames,svgnames,table]{xcolor}
\usepackage[unicode=true]{hyperref}
\hypersetup{ 
colorlinks,
linkcolor={blue!60!black},
citecolor={black},
urlcolor={blue!60!black},
pdftitle={The Excluded Tree Minor Theorem Revisited}}
\usepackage[capitalise, compress, nameinlink, noabbrev]{cleveref}
\crefname{lem}{Lemma}{Lemmas}
\crefname{thm}{Theorem}{Theorems}
\crefname{cor}{Corollary}{Corollaries}
\crefname{enumi}{Item}{Items}
\crefformat{equation}{(#2#1#3)}
\Crefformat{equation}{Equation #2(#1)#3}
\crefformat{enumi}{#2#1#3}
\Crefformat{enumi}{Item (#2#1#3)}
\newcommand{\defn}[1]{\textcolor{Maroon}{\emph{#1}}}
\usepackage[longnamesfirst,numbers,sort&compress]{natbib}
\makeatletter
\def\NAT@spacechar{~}
\makeatother
\setlength{\bibsep}{0.4ex plus 0.2ex minus 0.2ex}
\usepackage[margin=28mm]{geometry}
\renewcommand{\baselinestretch}{1.1}
\setlength{\footnotesep}{\baselinestretch\footnotesep}
\setlength{\parindent}{0cm}
\setlength{\parskip}{1.2ex}
\allowdisplaybreaks

\DeclarePairedDelimiter{\set}{\{}{\}} 
\newcommand\subsetcong{\mathrel{\text{%
    \setbox0\hbox{$\subseteq$}%
    \rlap{\hbox to \wd0{\hss\hss\hss\raisebox{1.5\height}{$\sim$}\hss}}\box0
}}}

\renewcommand{\epsilon}{\varepsilon}
\renewcommand{\emptyset}{\varnothing}

\renewcommand{\geq}{\geqslant}
\renewcommand{\leq}{\leqslant}

\DeclareMathOperator{\dist}{dist}
\DeclareMathOperator{\tw}{tw}

\DeclareMathOperator{\pw}{pw}
\DeclareMathOperator{\td}{td}

\newcommand{\PP}{\mathcal{P}}
\newcommand{\DD}{\mathcal{D}}
\newcommand{\BB}{\mathcal{B}}
\newcommand{\FF}{\mathcal{F}}

\newcommand{\NN}{\mathbb{N}}
\newcommand{\OO}{\mathcal{O}}

\newcommand{\david}[1]{{\color{orange} DW: #1}}

\newcommand{\piotr}[1]{\textcolor{brown}{PM: #1}}
\newcommand{\pat}[1]{\textcolor{DarkGrey}{PaMo: #1}}
\newcommand{\gwen}[1]{\textcolor{cyan}{GJ: #1}}

\theoremstyle{plain}
\newtheorem{thm}{Theorem}
\newtheorem{lem}[thm]{Lemma}

\newtheorem{prop}[thm]{Proposition}
\newtheorem{obs}[thm]{Observation}
\newtheorem*{claim}{Claim}
\crefname{obs}{Observation}{Observations}
\newtheorem*{lem*}{Lemma}
\theoremstyle{definition}

\newtheorem*{conj*}{Conjecture}

\begin{document}
\title{\bf\boldmath
\fontsize{18pt}{18pt}\selectfont
The Excluded Tree Minor Theorem Revisited}

\author{
\fontsize{16pt}{16pt}\selectfont
Vida Dujmovi{\'c}\,\footnotemark[6]\qquad
Robert~Hickingbotham\,\footnotemark[2] \qquad
Gwena\"el Joret\,\footnotemark[4] \\
Piotr Micek\,\footnotemark[5] \qquad
Pat Morin\,\footnotemark[3] \qquad
David~R.~Wood\,\footnotemark[2]
}

\maketitle

\begin{abstract}


We prove that for every tree $T$ of radius $h$, there is an integer $c$ such that every $T$-minor-free graph is contained in $H\boxtimes K_c$ for some graph $H$ with pathwidth at most $2h-1$. This is a qualitative strengthening of the Excluded Tree Minor Theorem of Robertson and Seymour (GM I). We show that radius is the right parameter to consider in this setting, and $2h-1$ is the best possible bound. 
\end{abstract}

\footnotetext[2]{School of Mathematics, Monash University, Melbourne, Australia (\texttt{\{robert.hickingbotham, david.wood\}@monash.edu}). Research of Wood supported by the Australian Research Council. Research of Hickingbotham supported by Australian Government Research Training Program Scholarships.}

\footnotetext[6]{School of Computer Science and Electrical Engineering, University of Ottawa, Ottawa, Canada (\texttt{vida.dujmovic@uottawa.ca}). Research supported by NSERC, and a Gordon Preston Fellowship from the School of Mathematics at Monash University.}

\footnotetext[4]{D\'epartement d'Informatique, Universit\'e libre de Bruxelles, Belgium (\texttt{gwenael.joret@ulb.be}). Supported by the Australian Research Council, and by a CDR grant and a PDR from the National Fund for Scientific Research (FNRS).}

\footnotetext[3]{School of Computer Science, Carleton University, Ottawa, Canada (\texttt{morin@scs.carleton.ca}). Research  supported by NSERC.}

\footnotetext[5]{Faculty of Mathematics and Computer Science, Jagiellonian University, Kraków, Poland (\texttt{piotr.micek@uj.edu.pl}). }

\section{\Large Introduction}
\label{Introduction}


\citet{RS-I} proved that for every tree $T$ there is an integer $c$ such that every $T$-minor-free graph has pathwidth at most $c$. \citet{BRST91} and \citet{Diestel95} showed the same result with $c=|V(T)|-2$, which is best possible, since the complete graph on $|V(T)|-1$ vertices is $T$-minor-free and has pathwidth $|V(T)|-2$. Graph product structure theory describes graphs in complicated classes as subgraphs of products of simpler graphs \citep{DJMMUW20,ISW,UTW}. Inspired by this viewpoint, we prove the following result, where $H\boxtimes K_c$ is the graph obtained from $H$ by replacing each vertex of $H$ by a copy of $K_c$ and replacing each edge of $H$ by the join between the corresponding copies of $K_c$.


\begin{thm}
\label{ExcludedTree}
For every tree $T$ of radius $h$, there exists  $c\in\NN$ such that every $T$-minor-free graph $G$ is contained in $H\boxtimes K_c$ for some graph $H$ with pathwidth at most $2h-1$. 
\end{thm}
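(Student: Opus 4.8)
The plan is to prove the theorem by induction on the radius $h$, using a breadth-first layering to expose a tree-of-separations whose pathwidth is governed by $h$, while absorbing bounded ``concentrated'' pieces into the $K_c$ factor. First I would make two reductions. Since pathwidth is unchanged under disjoint unions and the $K_c$-factors of the components can be amalgamated, we may assume $G$ is connected. Next, rooting $T$ at a central vertex yields a rooted tree of height $h$ and maximum degree $b:=\Delta(T)$; hence $T$ is a subgraph of the complete rooted tree $T_h$ in which the root has $b$ children, every other internal vertex has $b-1$ children, and every leaf is at depth $h$. As $T\subseteq T_h$, any $T_h$-minor contains a $T$-minor, so every $T$-minor-free graph is $T_h$-minor-free; and $T_h$ again has radius $h$. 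It therefore suffices to prove the statement for the canonical tree $T_h$.

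It is convenient to use the equivalent reformulation: $G\subseteq H\boxtimes K_c$ for some $H$ with $\pw(H)\le 2h-1$ if and only if $V(G)$ admits a partition into parts of size at most $c$, together with a path decomposition of $G$ in which each part occupies a contiguous set of bags and each bag meets at most $2h$ parts. Thus the goal is a path decomposition whose bags are covered by at most $2h$ bounded-size \emph{strands}. To build it, fix a breadth-first layering $L_0,L_1,\dots$ from a root and let $\BB$ be the associated \emph{blob tree}, whose nodes are the connected components of $G[L_i\cup L_{i+1}\cup\cdots]$ over all $i$, ordered by containment. Since the edges of $G$ run within a layer or between consecutive layers, a branching subtree of $\BB$ gives rise to a tree minor of $G$ whose radius is essentially the number of layers it spans, so each level of a nested branching adds one to the radius of the minor that can be built. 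The engine of the proof is the dual statement: $T_h$-minor-freeness forces the ``substantial'' branchings of $\BB$ to nest fewer than $h$ times, and along a depth-first sweep of $\BB$ at most two strands per nesting level are simultaneously active --- the separator feeding a branch from its parent, and the branch currently being explored --- giving at most $2h$ active strands per bag, i.e. $\pw(H)\le 2h-1$.

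The main obstacle is to make this precise while simultaneously bounding the part sizes by a constant $c=c(T)$, and the two requirements are genuinely coupled. For the width bound one must turn an abstract configuration of more than $2h$ active strands into an honest $T_h$-model, routing the $b$ branch sets of each internal node along breadth-first paths so that the radius budget $h$ and the branching $b$ are realised at once; the exact constant $2h$ (and its tightness) should come from the size-$2$ separators needed to create each successive level of branching. For the size bound, the point is that a strand forced to be large is either clique-like --- impossible, since a large clique already contains $T_h$ --- or consists of pairwise ``parallel'' vertices (as in $K_{2,n}$) that never need to share a bag and can be spread out one per bag, each becoming its own singleton part; the genuinely concentrated material that must co-occur in a single bag is bounded via the classical pathwidth bound $\pw(G)\le |V(T)|-2$ and is grouped into blocks of size at most $c$. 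Assembling the strand decomposition with these two bounds yields the partition into parts of size at most $c$ whose quotient $H$ has pathwidth at most $2h-1$, and hence $G\subseteq H\boxtimes K_c$.
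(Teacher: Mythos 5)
Your high-level accounting (induction on $h$, reduction to the complete tree $T_{h,d}$, the reformulation via partitions, and ``two active strands per level'' yielding width $2h$) matches the shape of the paper's argument, but the two load-bearing steps of your plan are asserted rather than proved, and as stated neither would go through. First, the claim that $T_h$-minor-freeness forces the ``substantial'' branchings of your BFS blob tree to nest fewer than $h$ times is not a theorem you can appeal to: ``substantial'' is undefined, branchings can recur unboundedly often (e.g.\ in caterpillars), nested blobs overlap so disjoint branch sets for a model do not fall out of the containment order, and converting a configuration of many active strands into an honest $T_{h,d}$-model is exactly the hard direction. The paper replaces this with a concrete mechanism: for a root $r$ with neighbourhood $R$, it considers the family $\FF$ of connected subgraphs of $G-r$ meeting $R$ and containing a $T_{h-1,d+1}$-minor, and shows via a \emph{re-rooting} argument (pick a branch set meeting $R$, contract the union of its ancestors' branch sets into a new root; the extra degree $d+1$ guarantees $d$ children survive) that $d$ disjoint members of $\FF$ would assemble into a $T_{h,d}$-minor. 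Your sketch has no counterpart to this step, and without it the width bound has no proof.

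Second, and more seriously, your mechanism for bounding the part sizes by a constant $c$ --- the dichotomy that a large strand is ``clique-like'' or consists of ``parallel'' vertices that can be spread out --- is not a valid argument; no such classification exists, and spreading vertices out interacts badly with the contiguity and adjacency constraints you imposed. The paper's solution couples the two requirements differently: it fixes an \emph{ambient} tree-decomposition $\DD$ of $G$ of width at most $t-2$ (via the result of \citet{BRST91} that you mention only in passing) and runs the whole induction relative to $\DD$, using the Helly-type \cref{HittingSet} to convert ``no $d$ disjoint members of $\FF$'' into a hitting set $X$ that is a union of at most $d-1$ \emph{bags} of $\DD$. Every part produced (the separators $X$, the recursively built parts, and the contracted set absorbing components not meeting $R$, handled by a side induction on $|V(G)|$ at the same $h$) is then a union of at most $d+h-2$ bags, giving $c=(d+h-2)(t-1)$ as in \cref{ExcludedTreePrecise}; the two parts $\set{r}$ and $X$ added to all bags at each of the $h$ recursion levels are what realise your ``two strands per level'' intuition. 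Without some analogue of this bag-counting device, your plan has no route to a finite $c$, so the proposal currently has genuine gaps at both of its critical junctures.
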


\cref{ExcludedTree} is a qualitative strengthening of the above-mentioned result of \citet{RS-I} since $\pw(G)\leq \pw(H\boxtimes K_c) \leq c(\pw(H)+1)-1 \leq 2ch-1$. Note that the proof of \cref{ExcludedTree} depends on the above-mentioned result of \citet{RS-I}. The point of \cref{ExcludedTree} is that $\pw(H)$ only depends on the radius of $T$, not on $|V(T)|$ which may be much greater than the radius. Moreover, radius is the right parameter of $T$ to consider here, as we now show. 


For a tree $T$, let $g(T)$ be the minimum $k\in\NN$ such that for some $c\in\NN$ every $T$-minor-free graph $G$ is contained in $H\boxtimes K_c$ where $\pw(H)\leq k$. \cref{ExcludedTree} shows that if $T$ has radius $h$, then $g(T)\leq 2h-1$. 
Now we show a lower bound. The following lemma by \citet{UTW} is useful, where $T_{h,d}$ is the complete $d$-ary tree of radius $h$.

\begin{lem}[{\protect\citep[v1,~Proposition~56]{UTW}}]
\label{CompleteTreePathwidthH}
For any $h,c\in\NN$, there exists $d\in\NN$ such that for every graph $H$, if $T_{h,d}$ is contained in $H\boxtimes K_c$, then $\pw(H)\geq h$. 
\end{lem}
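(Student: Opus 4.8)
The plan is to prove the statement by induction on $h$, in the contrapositive form: I will define $d(h,c)$ recursively and show that if $\pw(H)\leq h-1$ then $T_{h,d(h,c)}$ cannot embed as a subgraph of $H\boxtimes K_c$. The base case $h=0$ is immediate, since a single vertex always gives $\pw\geq 0$; alternatively $h=1$ is easy because an edgeless $H$ makes $H\boxtimes K_c$ a disjoint union of copies of $K_c$, whose largest star is $K_{1,c-1}$, so $T_{1,c}=K_{1,c}$ already forces an edge in $H$. The central tool is the projection $\pi\colon V(H)\times[c]\to V(H)$ underlying the strong product: if two vertices of $H\boxtimes K_c$ are adjacent, then their images under $\pi$ are equal or adjacent in $H$. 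Hence an embedding of a connected graph into $H\boxtimes K_c$ projects to a connected subgraph of $H$, and the bags of a path decomposition meeting such a connected subgraph form a contiguous interval. Crucially, since the embedding is injective and each fibre $\{v\}\times[c]$ has only $c$ elements, at most $c$ tree-vertices map to any fixed $v\in V(H)$, so any fixed $v$ lies in the projected image of at most $c$ pairwise disjoint subtrees.

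Now fix a path decomposition $(B_1,\dots,B_m)$ of $H$ of width $\le h-1$, so $|B_i|\le h$, and suppose $T_{h,d}\subseteq H\boxtimes K_c$ with root $r$. Let $x=\pi(r)$ and let $[a,b]$ be the interval of bags containing $x$. Each of the $d$ child-subtrees $T'_k\cong T_{h-1,d}$ projects to a connected subgraph $C_k$ of $H$, whose meeting bags form an interval $I_k$; since $x$ is equal or adjacent to some vertex of $C_k$, the interval $I_k$ meets $[a,b]$. I would then classify the subtrees: if $I_k\not\subseteq[a,b]$ then $I_k$ contains $a$ or $b$, so $C_k$ meets $B_a$ or $B_b$; by the fibre bound each vertex of a bag lies in at most $c$ of the $C_k$'s, so at most $2ch$ subtrees escape $[a,b]$. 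Likewise at most $c-1$ subtrees contain $x$ in their image. Therefore, provided $d>c(2h+1)$, at least one child-subtree $T'_k\cong T_{h-1,d}$ satisfies $I_k\subseteq[a,b]$ and $x\notin V(C_k)$.

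For this $T'_k$, set $H''=H[V(C_k)]$ and restrict the decomposition to $(B_i\cap V(C_k): i\in I_k)$; this is a path decomposition of $H''$, and since every such bag contains $x\notin V(C_k)$, its width drops to at most $h-2$, giving $\pw(H'')\le h-2$. On the other hand, the restriction of the embedding witnesses $T_{h-1,d}\subseteq H''\boxtimes K_c$ (using that fibres over $V(C_k)$ induce exactly $H''\boxtimes K_c$), so if $d\ge d(h-1,c)$ the induction hypothesis gives $\pw(H'')\ge h-1$, a contradiction. Setting $d(h,c)=\max\{d(h-1,c),\,c(2h+1)+1\}$ then completes the induction and hence the proof.

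I expect the main obstacle to be the bookkeeping forced by the non-injectivity of the projection $\pi$: distinct subtrees may share vertices of $H$, so the counting must rely on the fibre-size bound $c$ to guarantee that only few subtrees touch the separating bags $B_a,B_b$ or the vertex $x$. The delicate point is to extract a subtree that is genuinely confined to $x$'s bag-interval \emph{and} disjoint from $x$ — this is precisely what allows the recursion to delete $x$ from every relevant bag and thereby reduce the pathwidth budget by one, keeping the induction parameters aligned.
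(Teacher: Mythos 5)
Your proof is correct, but a direct comparison with the paper is not possible here: the paper does not prove \cref{CompleteTreePathwidthH} at all — it imports the statement from \citep[v1,~Proposition~56]{UTW} as a black box. So what you have produced is a self-contained replacement for the cited result, and it checks out. The three pillars all hold: (i) the projection of a connected subgraph of $H\boxtimes K_c$ is a connected subgraph of $H$, whose set of meeting bags in a path decomposition is an interval; (ii) the fibre bound — pairwise disjoint subtrees have disjoint images, so at most $c$ of them can project onto any fixed vertex of $H$ — correctly yields at most $ch$ escaping subtrees through each of $B_a$ and $B_b$ and at most $c-1$ subtrees whose projection contains $x$ (the root itself occupies one fibre element, which is why $c-1$ rather than $c$ is right); and (iii) since $I_k\subseteq[a,b]$ forces every bag meeting $C_k$ to contain $x\notin V(C_k)$, the restricted decomposition of $H[V(C_k)]$ has width at most $h-2$, while $\bigl(H\boxtimes K_c\bigr)\bigl[V(C_k)\times[c]\bigr]=H[V(C_k)]\boxtimes K_c$ makes the inductive hypothesis applicable. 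Your count $2ch+(c-1)<d$ means $d=c(2h+1)$ already suffices, and since $c(2h'+1)$ is monotone in $h'$, the single value $d(h,c)=c(2h+1)$ works at every level of the recursion; this gives an explicit linear-in-$h$ bound, which is a pleasant by-product of making the argument self-contained.

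Two details you leave implicit, both trivially fillable: the inductive step applies the hypothesis to $T_{h-1,d}$ with the same $d$, which requires the monotonicity observation that $T_{h-1,d'}$ is a subtree of $T_{h-1,d}$ for $d'\leq d$ (so ``$d\geq d(h-1,c)$ forces $\pw(H'')\geq h-1$'' is legitimate); and your base-case discussion wavers between $h=0$ and $h=1$ — either one works (for $h=1$, a star $K_{1,c}$ has $c+1$ vertices and is connected, so it cannot fit inside a single component $K_c$ of an edgeless blow-up, forcing an edge in $H$), but you should commit to one. Neither point is a gap in substance.
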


Let $T$ be any tree with radius $h$. Thus $T$ contains a path on $2h$ vertices, and $T_{h-1,d}$ contains no $T$-minor, as otherwise $T_{h-1,d}$ would contain a path on $2h$ vertices. 
By \cref{CompleteTreePathwidthH}, if $T_{h-1,d}$ is contained in $H\boxtimes K_c$, then $\pw(H)\geq h-1$. Hence 
\begin{equation}
\label{Summary}
h-1 \leq g(T) \leq 2h-1.
\end{equation}
This says that the radius of $T$ is the right parameter to consider in \cref{ExcludedTree}. 

Moreover, both the lower and upper bounds in \cref{Summary} can be achieved, as we now explain. The upper bound in \cref{Summary} is achieved when $T$ is a complete ternary tree, as shown by the  following result.

\begin{restatable}{prop}{TreeLowerBound}
\label{TreeLowerBound}
For all $h,c\in \NN$, there is a $T_{h,3}$-minor-free graph $G$, such that for every graph $H$, if $G$ is contained in $H\boxtimes K_c$, then $H$ has a clique of size $2h$, implying $\pw(H)\geq\tw(H)\geq 2h-1$.
\end{restatable}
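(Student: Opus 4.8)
The plan is to reformulate the conclusion combinatorially and then build $G$ by induction on $h$. Note first that $G \subseteq H \boxtimes K_c$ is equivalent to a partition of $V(G)$ into \emph{bags} of size at most $c$, one per vertex of $H$, such that the endpoints of every edge of $G$ lie in a common bag or in two bags adjacent in $H$; taking $H$ to be the quotient (one vertex per nonempty bag, with two bags adjacent when joined by an edge of $G$) shows that forcing a clique of size $2h$ in $H$ is exactly forcing $2h$ pairwise-touching bags in \emph{every} such partition. The final implication $\pw(H)\ge\tw(H)\ge 2h-1$ is then immediate. For small $c$ the forcing is easy: the complete graph $K_{2hc}$ is $T_{h,3}$-minor-free whenever $2hc < |V(T_{h,3})| = \frac{1}{2}(3^{h+1}-1)$, and since the clique number of $H\boxtimes K_c$ equals $c$ times that of $H$, any bagging of $K_{2hc}$ produces $2h$ pairwise-touching bags. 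The entire difficulty lies in large $c$, where no clique of $G$ can certify the bound and the $2h$ touching bags must be produced by the global structure of a graph that is necessarily sparse.

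For large $c$ I would construct $G=G_{h,c}$ recursively, taking $G_{1,c}$ to be a path on $c+1$ vertices: this is $T_{1,3}$-minor-free and, being connected on more than $c$ vertices, forces two touching bags. For the inductive step I would take many copies of $G_{h-1,c}$ and amalgamate them along a long ``spine'', arranging the construction so that every bagging is forced to acquire two further pairwise-touching bags beyond those coming from a single copy. The forcing claim would then be proved by induction: given a bagging of $G_{h,c}$, restrict it to a well-chosen copy of $G_{h-1,c}$ to obtain $2h-2$ pairwise-touching bags by the inductive hypothesis, and then use a pigeonhole/extremal argument along the spine to enlarge this family to $2h$ pairwise-touching bags. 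A subtle point to handle here is \emph{compatibility}: the two new bags must be adjacent in the quotient not only to each other but to all $2h-2$ bags inherited from the copy, so the interface between the spine and the copies has to be engineered to guarantee these extra adjacencies.

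The main obstacle is verifying that $G_{h,c}$ is $T_{h,3}$-minor-free while still forcing the clique, because the two requirements pull in opposite directions: by the Excluded Tree Minor Theorem a $T_{h,3}$-minor-free graph has bounded average degree, so $G$ must be sparse, yet it must be robust enough that no bagging into parts of size at most $c$ avoids $2h$ mutually adjacent parts. I would therefore control minor-freeness structurally rather than through clique number, mirroring the decomposition $T_{h,3}=$ root $+\,3$ copies of $T_{h-1,3}$: in any hypothetical model of $T_{h,3}$ in $G_{h,c}$, some branch set plays the role of the degree-$3$ root, and the three subtrees hanging off it are disjoint models of $T_{h-1,3}$. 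The goal is to show that the recursive, spine-based structure cannot host three such disjoint models emanating from one branch set---each would have to be confined to a single copy of $G_{h-1,c}$, which is $T_{h-1,3}$-minor-free by induction---forcing a contradiction. Calibrating the amalgamation so that it is sparse and ``low-branching'' enough for this argument to go through, yet connected and interlocked enough for the forcing argument to succeed, is the crux; the base case $h=1$, where a path simultaneously has maximum degree $2$ (hence no $T_{1,3}=K_{1,3}$ minor) and is connected (hence forces an edge in $H$), is the template to reproduce at every level.
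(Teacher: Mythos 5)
Your plan has the right skeleton and it does match the paper's proof in outline (induction on $h$, a path on $c+1$ vertices as the base case, many copies of $G_{h-1,c}$ amalgamated along a path, and a pigeonhole step to gain two new parts), but the two points you explicitly defer are precisely the content of the proof, and one of them is pointed in the wrong direction. First, the ``interface engineering'' you leave open has a concrete solution that you would need to find: for each edge $vw$ of a spine path $P$ on $c+1$ edges, attach $2c$ copies of $G_{h-1,c}$, each \emph{complete} to $\{v,w\}$. This single choice simultaneously resolves both of your open sub-problems. Since $|V(P)|>c$, some spine edge $v_1v_2$ has its endpoints in distinct parts $Q_1,Q_2$; since $|Q_i|\leq c$ and $v_i\in Q_i$, at most $c-1$ of the $2c$ copies attached to $v_1v_2$ meet $Q_1$ and at most $c-1$ meet $Q_2$, so some copy avoids $Q_1\cup Q_2$, and induction applied to the parts meeting that copy yields a clique of size $2h-2$; the compatibility adjacencies you worry about are then automatic, because every vertex of that copy is adjacent to both $v_1$ and $v_2$, so every part meeting the copy is adjacent in $H$ to both $Q_1$ and $Q_2$, giving the clique of size $2h$. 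Without specifying the multiplicity $2c$ and the complete join to \emph{both} endpoints of an edge, neither the pigeonhole nor the compatibility step goes through.

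Second, your sketch of minor-freeness would fail as stated. You propose to show that each of the three disjoint $T_{h-1,3}$-models hanging off the root ``would have to be confined to a single copy of $G_{h-1,c}$, which is $T_{h-1,3}$-minor-free by induction---forcing a contradiction.'' But confinement is impossible for exactly that reason, so there is nothing to force: in any putative $T_{h,3}$-model, each of the three subtree models $Y_1,Y_2,Y_3$ \emph{must} escape the copies and intersect the spine $P$, and these models may sprawl across many copies and long stretches of $P$. The actual argument is a separation argument along the path: because each copy attaches to a single edge of $P$, each connected $Y_i$ traces a subpath $P_i$ of $P$; ordering $P_1,P_2,P_3$ along $P$, no component of $G-P_2$ can be adjacent to both $Y_1$ and $Y_3$, so in particular the root branch set cannot be, a contradiction. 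This ordering/locality argument (which also needs the ``one copy per edge'' attachment pattern to guarantee subpath traces) is the missing idea; the confinement route is a dead end. As a minor remark, your small-$c$/large-$c$ case split via $K_{2hc}$ is unnecessary: the construction above handles all $c$ uniformly.
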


The next result improves \cref{ExcludedTree} for an excluded path. It shows that the lower bound in \cref{Summary} is achieved when $T$ is a path, since $P_{2h+1}$ has radius $h$, and a graph has no path on $2h+1$ vertices if and only if it is $P_{2h+1}$-minor-free. 

\begin{prop}
\label{ExcludedPath}
For any $h\in\NN$, every graph $G$ with no path on $2h+1$ vertices is contained in $H\boxtimes K_{4h}$ for some graph $H$ with $\pw(H) \leq h-1$. 
\end{prop}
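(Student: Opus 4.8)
The plan is to place $G$ inside the closure of a depth-first search tree and then contract that tree vertically so that its height drops from $2h$ down to $h$. First I would reduce to the case that $G$ is connected, since otherwise I can take the disjoint union of the graphs $H$ produced for the components, which does not increase pathwidth. So fix a DFS spanning tree $F$ of $G$ rooted at a vertex $r$, and let $\mathrm{clos}(F)$ be the graph on $V(F)$ in which two vertices are adjacent if and only if one is an ancestor of the other. Since every non-tree edge of $G$ joins an ancestor--descendant pair, $G\subseteq \mathrm{clos}(F)$. Moreover every path of $F$ is a path of $G$, so $F$ itself has no path on $2h+1$ vertices; in particular every root-to-leaf path of $F$ has at most $2h$ vertices.

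Next I would partition $V(F)$ by a heavy-path decomposition: for each non-leaf $v$ let $H_c$ denote the number of vertices on a longest descending path from a child $c$, call the child maximising $H_c$ the \emph{heavy} child of $v$, and let the parts be the maximal descending paths formed by heavy edges. Each part is a descending path of $F$, hence has at most $2h$ vertices. Let $F'$ be the rooted tree obtained by contracting each heavy path to a node (rooted at the node containing $r$). Because each part is a connected (indeed descending) subtree, the ancestor relation is preserved under contraction, so the quotient of $\mathrm{clos}(F)$ by this partition is contained in $\mathrm{clos}(F')$. Setting $H:=\mathrm{clos}(F')$ and using that each part has at most $2h$ vertices gives $G\subseteq \mathrm{clos}(F)\subseteq H\boxtimes K_{2h}\subseteq H\boxtimes K_{4h}$, comfortably inside the claimed bound. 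It remains to bound $\pw(H)$.

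Since $\pw(\mathrm{clos}(F'))\le(\text{max vertices on a root-to-leaf path of }F')-1$, it suffices to show every root-to-leaf path of $F'$ has at most $h$ vertices, i.e.\ every root-to-leaf path of $F$ meets at most $h$ heavy paths, i.e.\ crosses at most $h-1$ light (non-heavy) edges. This counting step is the crux, and it is exactly where the no-long-path hypothesis is used. The key point is that crossing a light edge forces the descending height to drop by a full unit: if $v\to c$ is a light edge and $c'$ is the heavy child of $v$, then the path through $v$ joining the two branches has $H_c+1+H_{c'}$ vertices, which is at most $2h$, and $H_{c'}\ge H_c$, so $H_c\le h-1$. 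Along a fixed root-to-leaf path crossing light edges into $c_1,\dots,c_k$ (in order of increasing depth) I then have $H_{c_1}\le h-1$, while the heavy-path property gives $H_{c_j}\le H_{c_{j-1}}-1$ (as the vertex carrying the $j$-th light edge lies below $c_{j-1}$ and $c_j$ is not its heavy child); hence $1\le H_{c_k}\le H_{c_1}-(k-1)\le h-k$, forcing $k\le h-1$. The main obstacle is isolating precisely this \emph{monotone} unit decrease --- rather than the merely logarithmic decrease one gets for heavy-path decompositions of arbitrary trees --- and it is the no-$P_{2h+1}$ condition that supplies it; once the bound $k\le h-1$ is in place, the height bound on $F'$ and hence $\pw(H)\le h-1$ follow immediately.
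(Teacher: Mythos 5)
Your proof is correct, but it takes a genuinely different route from the paper's. The paper deduces \cref{ExcludedPath} from a decomposition-relative lemma: by induction on $h$, a connected graph containing three disjoint paths on $2h-1$ vertices contains a path on $2h+1$ vertices, so \cref{HittingSet} yields a hitting set $S$ consisting of at most two bags of an arbitrary tree-decomposition $\DD$; recursing on the $P_{2h-1}$-free graph $G-S$ gives a partition $\PP$ with $\td(G/\PP)\leq h$ in which every part lies in at most two bags of $\DD$, and the width bound $4h$ then comes from taking $\DD$ of width $2h-1$ via the result of \citet{BRST91}. You instead take a DFS spanning forest $F$ (so $G\subseteq\mathrm{clos}(F)$ and $F$ has no path on $2h+1$ vertices), partition it into heavy paths, and show that the no-$P_{2h+1}$ hypothesis forces the descending height $H_{c}$ to satisfy $H_{c_1}\leq h-1$ at the first light edge (from $H_{c}+1+H_{c'}\leq 2h$ with $H_{c'}\geq H_c$) and to decrease by at least one at each subsequent light edge (your chain $H_{c_j}\leq H_{v_j}-1\leq H_{c_{j-1}}-1$ is sound, noting it needs only the weak inequality $H_{v_j}\leq H_{c_{j-1}}$, which also covers the case $v_j=c_{j-1}$), so any root-to-leaf path crosses at most $h-1$ light edges and the contracted tree $F'$ has vertex-height at most $h$. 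Your approach buys several things: it is entirely self-contained (no appeal to \cref{HittingSet} nor to the pathwidth bound of \citet{BRST91}), it improves the constant from $K_{4h}$ to $K_{2h}$ since heavy paths have at most $2h$ vertices, and since $H=\mathrm{clos}(F')$ is the closure of a forest of vertex-height at most $h$ you even recover $\td(H)\leq h$, matching the tree-depth strengthening in the paper's lemma. What you lose is the decomposition-relative form: the paper's parts are subsets of at most two bags of an \emph{arbitrary} tree-decomposition $\DD$, a stronger statement useful in that line of work, whereas your parts are vertical paths of a DFS tree with no control relative to a given $\DD$.
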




\section{\Large Background}
\label{Background}

We consider simple, finite, undirected graphs~$G$ with vertex-set~${V(G)}$ and edge-set~${E(G)}$. See \citep{Diestel5} for graph-theoretic definitions not given here. 
For $m,n \in \mathbb{Z}$ with $m \leq n$, let $[m,n]:=\{m,m+1,\dots,n\}$ and $[n]:=[1,n]$. 

A graph $H$ is a \defn{minor} of a graph $G$ if $H$ is isomorphic to a graph that can be obtained from a subgraph of $G$ by contracting edges. A graph~$G$ is \defn{$H$-minor-free} if~$H$ is not a minor of~$G$. 
An \defn{$H$-model} in a graph $G$ consists of pairwise-disjoint vertex subsets  $(W_x \subseteq V(G) :x\in V(H))$ (called \defn{branch sets})  such that each subset induces a connected subgraph of $G$, and 
for each edge $xy\in V(H)$ there is an edge in $G$ joining $W_x$ and $W_y$. Clearly $H$ is a minor of $G$ if and only if $G$ contains an $H$-model.

A \defn{tree-decomposition} of a graph $G$ is a collection $(B_x :x\in V(T))$ of subsets of $V(G)$ (called \defn{bags}) indexed by the vertices of a tree $T$, such that (a) for every edge $uv\in E(G)$, some bag $B_x$ contains both $u$ and $v$, and (b) for every vertex $v\in V(G)$, the set $\{x\in V(T):v\in B_x\}$ induces a non-empty (connected) subtree of $T$. 
The \defn{width} of $(B_x:x\in V(T))$ is $\max\{|B_x| \colon x\in V(T)\}-1$. The \defn{treewidth} of a graph $G$, denoted by \defn{$\tw(G)$}, is the minimum width of a tree-decomposition of $G$. A \defn{path-decomposition} is a tree-decomposition in which the underlying tree is a path, simply denoted by the sequence of bags $(B_1,\dots,B_n)$. The \defn{pathwidth} of a graph $G$, denoted by \defn{$\pw(G)$}, is the minimum width of a path-decomposition of $G$. 



The following lemma is folklore (see \citep{ISW} for a proof). 

\begin{lem}
\label{HittingSet}
For every graph $G$, for every tree-decomposition $\DD$ of $G$, for every collection $\FF$ of connected subgraphs of $G$, and for every $\ell\in\NN$, either\textnormal{:}
\begin{enumerate}[\textnormal{(}a\textnormal{)}]
    \item there are $\ell$ vertex-disjoint subgraphs in $\FF$, or
    \item there is a set $S\subseteq V(G)$ consisting of at most $\ell-1$ bags of $\DD$ such that $S \cap V(F) \neq \emptyset$ for all $F \in \FF$.
\end{enumerate}
\end{lem}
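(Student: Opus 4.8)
\emph{The plan is to} translate the collection $\FF$ into a family of subtrees of the tree indexing $\DD=(B_x:x\in V(T))$, and then prove the classical packing--covering duality for subtrees of a tree by a greedy argument.

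First I would associate to each $F\in\FF$ the set $T_F:=\{x\in V(T):B_x\cap V(F)\neq\emptyset\}$. Since $F$ is connected, $T_F$ induces a subtree of $T$: for each $v\in V(F)$ the set $\{x:v\in B_x\}$ is a connected subtree by axiom (b), adjacent vertices of $F$ share a bag by axiom (a), and $F$ being connected makes the union of these subtrees over $V(F)$ connected. The key observation, used later for (a), is that if $T_F\cap T_{F'}=\emptyset$ then $F$ and $F'$ are vertex-disjoint, since a common vertex $v\in V(F)\cap V(F')$ would lie in some bag $B_x$, placing $x$ in both $T_F$ and $T_{F'}$.

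Next I would establish the duality for the subtrees $(T_F:F\in\FF)$: there are either $\ell$ pairwise-disjoint members, or at most $\ell-1$ vertices of $T$ meeting every member. Root $T$ at an arbitrary vertex $r$, and for a subtree $S$ let $\mathrm{peak}(S)$ be its vertex minimising $\dist(r,\cdot)$; a short argument shows every vertex of $S$ is a descendant of $\mathrm{peak}(S)$. Then run the greedy procedure: repeatedly select a surviving subtree whose peak is deepest (largest $\dist(r,\cdot)$), record its peak $x_i$, and discard every surviving subtree containing $x_i$, stopping when none survive. This yields vertices $x_1,\dots,x_k$ meeting every $T_F$ by construction, together with witnesses $T_{F_1},\dots,T_{F_k}$.

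The main obstacle is showing $T_{F_1},\dots,T_{F_k}$ are pairwise disjoint. For $i<j$, the subtree $T_{F_j}$ survived stage $i$, so $x_i\notin T_{F_j}$, and it was present when $F_i$ was chosen, so $\dist(r,x_i)\ge\dist(r,\mathrm{peak}(T_{F_j}))$. A common vertex $z\in T_{F_i}\cap T_{F_j}$ would be a descendant of both $x_i$ and $\mathrm{peak}(T_{F_j})$; these are then comparable ancestors of $z$, and the distance inequality forces $x_i$ onto the $\mathrm{peak}(T_{F_j})$--$z$ path, which lies inside the subtree $T_{F_j}$, contradicting $x_i\notin T_{F_j}$. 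By the key observation, $F_1,\dots,F_k$ are therefore pairwise vertex-disjoint. I would then conclude by the dichotomy on $k$: if $k\ge\ell$, then $F_1,\dots,F_\ell$ give option (a); if $k\le\ell-1$, then $S:=B_{x_1}\cup\dots\cup B_{x_k}$ is a union of at most $\ell-1$ bags meeting every $F\in\FF$ (some $x_i\in T_F$ gives $B_{x_i}\cap V(F)\neq\emptyset$), which is option (b). The only delicate point is the disjointness claim, which is exactly where the rooting and the deepest-peak choice are needed.
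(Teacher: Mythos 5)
Your proof is correct, and it is the standard folklore argument that the paper itself does not spell out but instead delegates to \citep{ISW}: reduce each connected $F\in\FF$ to the subtree $T_F$ of the decomposition tree, then run the deepest-peak greedy, which simultaneously produces a packing of witness subtrees and a hitting set of peaks, so the dichotomy on $k$ yields exactly (a) or (b). All the delicate steps check out — in particular the disjointness argument (comparability of two ancestors of a common vertex $z$, plus the depth inequality from the greedy choice, forcing $x_i$ onto the $\mathrm{peak}(T_{F_j})$--$z$ path inside $T_{F_j}$) is exactly right, so there is nothing to fix.
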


The \defn{strong product} of graphs~$A$ and~$B$, denoted by~${A \boxtimes B}$, is the graph with vertex-set~${V(A) \times V(B)}$, where distinct vertices ${(v,x),(w,y) \in V(A) \times V(B)}$ are adjacent if
	${v=w}$ and ${xy \in E(B)}$, or
	${x=y}$ and ${vw \in E(A)}$, or
	${vw \in E(A)}$ and~${xy \in E(B)}$.

Let $G$ be a graph. A \defn{partition} of $G$ is a set $\PP$ of sets of vertices in $G$ such that each vertex of $G$ is in exactly one element of $\PP$. Each element of $\PP$ is called a \defn{part}. The \defn{width} of $\PP$ is the maximum number of vertices in a part. The \defn{quotient} of $\PP$ (with respect to $G$) is the graph, denoted by \defn{$G/\PP$}, with vertex set $\PP$ where distinct parts $A,B\in \PP$ are adjacent in $G/\PP$ if and only if some vertex in $A$ is adjacent in $G$ to some vertex in $B$. An \defn{$H$-partition} of $G$ is a partition $\PP$ of $G$ such that $G/\PP$ is contained in $H$. The following observation connects partitions and products.

\begin{obs}[\citep{DJMMUW20}]
\label{ObsPartitionProduct}
For all graphs $G$ and $H$ and any $p\in\NN$, $G$ is contained in $H\boxtimes K_p$ if and only if $G$ has an $H$-partition with width at most $p$.
\end{obs}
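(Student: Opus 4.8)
The plan is to prove both directions by exhibiting the natural correspondence between an embedding of $G$ into $H\boxtimes K_p$ and an $H$-partition of $G$ of width at most $p$. The key preliminary observation is that, because $K_p$ is complete, the adjacency in $H\boxtimes K_p$ simplifies: two distinct vertices $(v,x)$ and $(w,y)$ are adjacent if and only if either $v=w$, or $vw\in E(H)$. Indeed $xy\in E(K_p)$ holds for all distinct $x,y\in[p]$, so the three conditions in the definition of the strong product collapse to these two cases. I would state this simplification first, since every later verification relies on it.

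For the forward direction, suppose $G$ is contained in $H\boxtimes K_p$ via an injective edge-preserving map $\phi\colon V(G)\to V(H)\times[p]$. For each $v\in V(H)$, let $A_v$ be the set of vertices of $G$ mapped by $\phi$ into the ``column'' $\{v\}\times[p]$, and let $\PP$ be the collection of nonempty $A_v$. Injectivity of $\phi$ gives $|A_v|\le p$, so $\PP$ has width at most $p$. To check that $\PP$ is an $H$-partition, I would take any two distinct parts $A_v,A_w$ joined by an edge of $G$; the endpoints map to adjacent vertices with distinct first coordinates $v\ne w$, so by the simplified adjacency we must have $vw\in E(H)$. Hence the map $A_v\mapsto v$ embeds $G/\PP$ into $H$.

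For the backward direction, suppose $\PP$ is an $H$-partition of width at most $p$, witnessed by an embedding $\beta\colon\PP\to V(H)$ of $G/\PP$ into $H$. Since each part $A$ has at most $p$ vertices, I would fix an injection $\alpha_A\colon A\to[p]$ and define $\phi(u)=(\beta(A),\alpha_A(u))$ for $u$ in part $A$. Injectivity of $\phi$ follows from injectivity of $\beta$ and of each $\alpha_A$. To verify edges, I would split an edge $uu'\in E(G)$ into two cases: if $u,u'$ lie in the same part, their images share a first coordinate and differ in the second, hence are adjacent; if they lie in distinct parts $A,A'$, then $AA'$ is an edge of $G/\PP$, so $\beta(A)\beta(A')\in E(H)$ and the images are again adjacent.

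Since the statement is a routine unpacking of definitions, I do not expect a genuine obstacle. The one point requiring care is the forward direction's use of injectivity both to bound the part sizes and to rule out the case $v=w$ when concluding $vw\in E(H)$; getting the simplified adjacency right at the outset makes both directions essentially mechanical.
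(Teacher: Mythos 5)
Your proof is correct, and it is the standard argument: the paper itself states this as an observation cited from \citep{DJMMUW20} without including a proof, and your unpacking of the definitions --- simplifying adjacency in $H\boxtimes K_p$ to ``$v=w$ or $vw\in E(H)$'', reading off the parts as the columns $\{v\}\times[p]$ in one direction, and assembling an injection from $\beta$ and the per-part injections $\alpha_A$ in the other --- is exactly the canonical correspondence the cited source uses. Your attention to where injectivity is needed (bounding part sizes, ruling out $v=w$ in the quotient-embedding step) is the only point of care, and you handle it correctly.
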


\section{\Large Proofs}
\label{Proofs}

We prove the following quantitative version of \cref{ExcludedTree}.

\begin{thm}
\label{ExcludedTreePrecise}
Let $T$ be a tree with $t$ vertices, radius $h$, and maximum degree $d$. Then every $T$-minor-free graph $G$ is contained in $H\boxtimes K_{(d+h-2)(t-1)}$ for some graph $H$ with pathwidth at most $2h-1$.
\end{thm}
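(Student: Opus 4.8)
By \cref{ObsPartitionProduct}, it is enough to find a partition $\PP$ of $G$ with parts of size at most $(d+h-2)(t-1)$ whose quotient $H=G/\PP$ has $\pw(H)\le 2h-1$; I would first reduce to the case where $G$ is connected, since taking disjoint unions of the resulting graphs $H$ preserves both the width of the parts and the pathwidth bound. Fix an arbitrary root $r$ and a BFS spanning tree, with distance layers $L_0,L_1,\dots$ The whole construction is organised around these layers: the clique factor will gather, for each part, a bounded number of vertices lying in a short band of consecutive layers, while the path-like structure of $H$ is read off from the linear order of the layers. I would also fix, as an input, a path-decomposition $\DD$ of $G$ of width $t-2$ supplied by the Excluded Tree Minor Theorem of \citet{RS-I} (in the sharp form with bags of size $t-1$); this is the only place the bound on $|V(T)|$ is used, and it is what contributes the factor $t-1$.

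The engine of the proof is a quantitative, BFS-aware version of the statement ``rich branching forces a $T$-minor''. Root $T$ at a central vertex so that it has height $h$, and note that the subtree hanging from each child of the root has height at most $h-1$. I would prove, by induction on $h$, an embedding lemma asserting that if a vertex $v$ at layer $i$ has $d$ vertex-disjoint connected subgraphs contained in layers $>i$, each of which recursively accommodates a height-$(h-1)$ rooted subtree of $T$ and is attached to $v$, then $T$ is a minor of $G$. Applying this contrapositively to the $T$-minor-free graph $G$: for each vertex $v$ the relevant family $\FF_v$ of candidate child-subgraphs has no $d$ vertex-disjoint members, so \cref{HittingSet} applied to $\DD$ produces a set of at most $d-1$ bags of $\DD$ meeting every member of $\FF_v$. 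This hitting set, of size at most $(d-1)(t-1)$, is the ``core'' attached to $v$, and the recursion down the $h$ levels of $T$ adds one further bag per level.

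With this branching bound in hand, I would build $\PP$ so that each part is a union of at most $d+h-2$ bags of $\DD$ — giving the width bound $(d+h-2)(t-1)$ — where the $d-1$ bags come from the top-level hitting set and the remaining $h-1$ bags are accumulated, one per level, as the induction peels the radius of $T$ from $h$ down to $0$. Simultaneously I would arrange that two parts are adjacent in $H$ only when their layers are within a bounded window, so that ordering the parts by layer index yields a path-decomposition of $H$. The base case $h=0$ (where $T=K_1$ and $G$ is edgeless) gives the empty graph with $\pw=-1=2\cdot 0-1$, and each peeling step should increase the pathwidth bound by exactly $2$, from $2(h-1)-1$ to $2h-1$, and the width coefficient by exactly $1$.

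The main obstacle is to make this accounting \emph{tight}: the qualitative version (some $c=c(T)$ and some $\pw(H)$ depending only on $h$) follows relatively easily, but hitting the sharp pathwidth $2h-1$, which must match the lower bound of \cref{TreeLowerBound}, is delicate. The crux is the inductive step, where after peeling the top shell of layers one must re-root and re-layer the remainder so that the subgraphs feeding the recursion remain vertex-disjoint across the $d$ branches and so that the path-decompositions of the pieces concatenate with only the constant overlap that accounts for the ``$+2$'' per level. Controlling this overlap — rather than letting each level contribute more — is precisely what forces the careful BFS-layered bookkeeping, and is where I expect the real work to lie.
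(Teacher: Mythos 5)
Your scaffolding matches the paper's: reduce to connected $G$, take a tree-decomposition with bags of size at most $t-1$ from the result of \citet{BRST91}, use \cref{HittingSet} to extract at most $d-1$ bags hitting a family of candidate subtree models attached near the root, recurse on the radius with a $+2$ increment in pathwidth per level, and invoke \cref{ObsPartitionProduct} at the end. But two load-bearing ideas are missing, and the first is a genuine logical gap rather than bookkeeping. Your hitting family $\FF_v$ consists of subgraphs that accommodate a height-$(h-1)$ subtree \emph{rooted at the attachment to $v$}; after deleting the hitting set you therefore only know that the remaining components contain no such \emph{rooted} model, whereas your recursion needs an \emph{unrooted}, minor-closed hypothesis (``$T'$-minor-free'') to restart on the components. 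Rootedness is not inherited by the recursion, and a component may well contain $T_{h-1,d}$ as a minor while containing no model rooted where it attaches. The paper resolves this with an arity boost: $\FF$ is taken to be the connected subgraphs of $G-r$ meeting $N(r)$ that contain a $T_{h-1,d+1}$-minor (degree $d+1$, not $d$); in any $T_{h-1,d+1}$-model one can contract the branch sets of all ancestors of the attachment node into the root and still retain a $T_{h-1,d}$-model rooted at the attachment, since each internal node has one child to spare. The recursion then runs with parameters $(h-1,d+1)$, and the invariance $(d+1)+(h-1)-2=d+h-2$ is precisely where the bag count $d+h-2$ comes from --- not from accumulating one extra bag per level into the same part, as your accounting suggests: in the paper's partition each part is either the root singleton, a single hitting set (at most $d'-1$ bags where $d'$ is the inflated arity at that depth), or a part produced deeper in the recursion.

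The second gap is the assembly of the path-decomposition, which you yourself flag as ``where the real work lies'' --- and it is not fixable by your BFS-window scheme. Parts are unions of bags of $\DD$, and bags of a path-decomposition of $G$ are not confined to bounded bands of BFS layers, so adjacency in the quotient is not layer-local and ordering parts by layer index does not yield a path-decomposition. Worse, the components of $G-r-X$ with no neighbour of $r$ must be appended \emph{without} adding $\{r\}$ and the hitting set $X$ to their bags, or the width grows past $2h-1$ as the recursion nests. The paper handles this with a strengthened inductive statement ($\{r\}$ is a singleton part, and the first bag of the path-decomposition of $G/\PP$ contains $\{r\}$) and a lexicographic induction on $(h,|V(G)|)$: by minimality of $X$, each $v\in X$ has a path $P_v$ to $r$ avoiding the far vertex set $Z$; contracting $\bigcup_{v\in X}P_v$ into a new root $r'$ gives a strictly smaller $T_{h,d}$-minor-free graph on $\{r'\}\cup Z$, to which the induction applies \emph{at the same radius $h$}. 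The final decomposition concatenates $\BB_1,\dots,\BB_p,\BB'$, adding $\{r\}$ and $X$ to the bags of each $\BB_i$ (width $2h-3+2=2h-1$) and splicing $X$ in place of $\{r'\}$ in the first bag of $\BB'$. Without the contraction/re-rooting device and the first-bag invariant, your concatenation has no mechanism to close, so the proposal as written does not yield the theorem.
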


Recall that $T_{h,d}$ is the complete $d$-ary tree of radius $h$. \cref{ObsPartitionProduct} and the next lemma imply \cref{ExcludedTreePrecise}, since  the tree $T$ in \cref{ExcludedTreePrecise} is a subtree of $T_{h,d}$, and every $T$-minor-free graph $G$ satisfies $\tw(G)\leq\pw(G)\leq t-2$ by the result of \citet{BRST91} mentioned in \cref{Introduction}.

\begin{lem}
For any $h,d\in\NN$ with $d+h\geq 3$, for every $T_{h,d}$-minor-free graph $G$, for every tree-decomposition $\DD$ of $G$, and for every vertex $r$ of $G$, the graph $G$ has a partition $\PP$ such that: 
\begin{itemize}
\item each part of $\PP$ is a subset of the union of at most $d+h-2$ bags of $\DD$, 
\item $\{r\}\in\PP$, and
\item $G/\PP$ has a path-decomposition of width at most $2h-1$ in which the first bag contains $\{r\}$. 
\end{itemize}
\end{lem}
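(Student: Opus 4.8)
The plan is to induct on $h$, using \cref{HittingSet} at each step to find a small separator that reduces the excluded tree from $T_{h,d}$ to $T_{h-1,d}$, and to build the path-decomposition of the quotient level by level. First I would reduce to the case that $G$ is connected: distinct components are pairwise non-adjacent, so their quotient path-decompositions can be concatenated (placing the component of $r$ first, with $\{r\}$ in its first bag), and the width, part-size, and root conditions are all preserved. For the base case $h=1$ (where $d\ge 2$ since $d+h\ge 3$) I would show that a connected $K_{1,d}$-minor-free graph admits a partition into parts that are unions of at most $d-1$ bags whose quotient is a caterpillar forest, so pathwidth at most $1=2h-1$; the orthogonal base case $d=1$ (excluded paths) would be handled directly, in the spirit of \cref{ExcludedPath}, since $T_{h,1}=P_{h+1}$.

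For the inductive step, let $\FF$ be the family of connected subgraphs of $G$ that contain a $T_{h-1,d}$-minor, and apply \cref{HittingSet} with $\ell=d$. If \cref{HittingSet}(a) held, there would be $d$ vertex-disjoint members of $\FF$, and I would combine these $d$ copies of $T_{h-1,d}$ with a connected ``root region'' to form a $T_{h,d}$-minor, contradicting the hypothesis. Hence \cref{HittingSet}(b) applies: there is a set $S$, the union of at most $d-1$ bags of $\DD$, meeting every member of $\FF$. Since any $T_{h-1,d}$-model induces a connected subgraph, it follows that every component of $G-S$ is $T_{h-1,d}$-minor-free.

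I would then recurse on each component $C$ of $G-S$, with the tree-decomposition induced on $C$ and rooted at a vertex of $C$ adjacent to $S$, obtaining a partition of $C$ whose parts are unions of at most $d+(h-1)-2$ bags and whose quotient has a path-decomposition of width at most $2(h-1)-1$ with its first bag containing the chosen root. Keeping $S$ and $\{r\}$ as separate new parts (each a union of at most $d-1$ bags, hence well within the $d+h-2$ budget), I would concatenate the path-decompositions of the components and insert the parts of $S$ (and $\{r\}$) into every bag. Because $S$ separates $G$, these parts persist through all descendant bags, so each recursion level contributes a bounded number of persistent parts to the frontier; arranging that this number is at most two per level yields total width at most $2h-1$. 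The singleton part $\{r\}$ is placed in the first bag and treated as one of the top-level persistent parts so that all of its adjacencies are realised and the first-bag condition is met; \cref{ObsPartitionProduct} then recovers the product formulation.

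The main obstacle is the separation step, namely proving that \cref{HittingSet}(a) genuinely forces a $T_{h,d}$-minor. Disjoint deep subgraphs alone do not suffice, since the $d$ copies of $T_{h-1,d}$ must all attach to a \emph{common} connected root region that is disjoint from them, and producing such a region is delicate because of cross edges between different branches of the decomposition. I expect to overcome this by anchoring the construction at $r$ (via a BFS tree, or by rooting the tree-decomposition at a bag containing $r$) so that deep subgraphs are forced into separated branches below a common bag, which then supplies the root region. The secondary, bookkeeping-heavy difficulty is calibrating the frontier so that exactly $2h-1$ is attained, matching the tight lower bound of \cref{TreeLowerBound}; this requires each level to contribute precisely the right number of persistent parts and careful composition of the per-component decompositions and the singleton part $\{r\}$.
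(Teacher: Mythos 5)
The proposal contains a genuine gap, at precisely the step you flag as the ``main obstacle'', and the fix you sketch (anchoring at $r$ via a BFS tree or a rooted tree-decomposition) does not supply the missing idea. With your family $\FF$ (connected subgraphs containing a $T_{h-1,d}$-minor), outcome (a) of \cref{HittingSet} simply cannot be converted into a $T_{h,d}$-minor: for $h=d=2$, the path $P_6$ contains two disjoint copies of $T_{1,2}=P_3$ yet has no $T_{2,2}$-minor (a path has no minor with a vertex of degree $3$), so a $T_{h,d}$-minor-free graph can land in case (a) and the dichotomy collapses. Anchoring does not repair this, because even when each member of $\FF$ attaches to a common root region, a $T_{h-1,d}$-model attaches at an \emph{arbitrary} branch set, and re-rooting a tree of radius $h-1$ at an arbitrary node can double its height. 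The paper's mechanism is different in two coupled ways: $\FF$ consists of connected subgraphs of $G-r$ that meet $R=N_G(r)$ \emph{and} contain a $T_{h-1,d+1}$-minor, with the degree bumped from $d$ to $d+1$. Given such a model, one merges the branch sets of all ancestors of the node whose branch set meets $R$ into a single root branch set; since every internal node of $T_{h-1,d+1}$ has $d+1$ children, discarding one child per merged node still leaves a $T_{h-1,d}$-model rooted at $R$, and $d$ disjoint such models together with $r$ yield $T_{h,d}$. This degree-for-height exchange is also exactly what keeps the bag budget invariant under the recursion: the components adjacent to $R$ are $T_{h-1,d+1}$-minor-free and $(d+1)+(h-1)-2=d+h-2$; your stated recursive budget of $d+(h-1)-2$ bags, which shrinks with $h$, is a symptom of the missing increment.

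A second, consequent gap: once $\FF$ is restricted to subgraphs meeting $R$ (which the first point forces), the hitting set $X$ only controls components of $G-r-X$ that contain a vertex of $R$; the remaining components may still contain arbitrarily large tree minors and cannot be recursed at height $h-1$. Your proposal avoids this issue only because your unrestricted $\FF$ is the very choice that breaks the minor-forcing step, so the two defects cannot be patched independently. The paper handles these leftover components (the set $Z$) by a \emph{secondary} induction on $|V(G)|$ at the same height $h$: by minimality of $X$, every $v\in X$ is joined to $r$ by a path $P_v$ avoiding $Z$; contracting the union of these paths to a new root $r'$ gives a smaller $T_{h,d}$-minor-free graph on $\{r'\}\cup Z$, whose partition $\PP'$ and path-decomposition $\BB'$ (of width $2h-1$, with $X$ substituted for $\{r'\}$ in its bags) are concatenated \emph{after} the decompositions $\BB_1,\dots,\BB_p$ of the attached components, without the extra two parts. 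Your frontier accounting for the attached components ($\{r\}$ and $S$ added to every bag, giving width $2(h-1)-1+2=2h-1$) does match the paper's, but without the degree bump and the contraction-based secondary induction the argument does not go through.
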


\begin{proof} 
We proceed by induction on pairs $(h,|V(G)|)$ in a lexicographic order. 
Fix $h$, $d$, $G$, $\DD$, and $r$ as in the statement. 
We may assume that $G$ is connected. The statement is trivial if $|V(G)|\leq 1$. Now assume that $|V(G)|\geq 2$. 

For the base case, suppose that $h=1$. For $i\geq 0$, let $V_i:=\{v\in V(G):\dist_G(v,r)=i\}$. So $V_0=\{r\}$. If $|V_i|\geq d$ for some $i\geq 1$, then contracting $G[V_0\cup\dots\cup V_{i-1}]$ into a single vertex gives a $T_{1,d}$ minor. So $|V_i|\leq d-1=d+h-2$ for each $i\geq 0$. 
Thus $\PP:=(V_i:i\geq 0)$ is a partition of $G$, and each part of $\PP$ is a subset of the union of at most $d+h-2$ bags of $\DD$. Moreover, the quotient $G/\PP$ is a path, which has a path-decomposition of width 1, in which the first bag  contains $\{r\}$. 

Now assume that $h\geq 2$ and the result holds for $h-1$. 
Let $R$ be the neighbourhood of $r$ in $G$.
Let $\mathcal{F}$ be the set of all connected subgraphs of $G-r$ that contain a vertex from $R$ and contain a $T_{h-1,d+1}$ minor. If there are $d$ pairwise vertex-disjoint subgraphs $S_1,\ldots,S_d$ in $\mathcal{F}$, 
then we claim that $G$ contains a $T_{h,d}$ minor. 
Indeed, for each $i\in [d]$ consider a $T_{h-1,d+1}$-model $(W^i_x: x\in V(T_{h-1,d+1}))$ in $S_i$. Since $S_i$ is connected, we may assume that all vertices of $S_i$ are in the model. 
For each $i\in [d]$, let $y_i$ be a node of $T_{h-1,d+1}$ such that $W^i_{y_i}$ contains a vertex from $R$, and let $Y^i$ be the union of $W^i_x$ for all ancestors $x$ of $y_i$ in $T_{h-1,d+1}$. 
Observe that there is a $T_{h-1,d}$-model in $S_i$ such that the root of $T_{h-1,d}$ is mapped to the set $Y^i$. Therefore $G-r$ contains $d$ pairwise disjoint models of $T_{h-1,d}$ such that each root branch set contains a vertex from $R$. So $G$ contains a model of $T_{h,d}$, as claimed.

So $\mathcal{F}$ contains no $d$ pairwise vertex-disjoint elements. By \cref{HittingSet}, there is a minimal set $X\subseteq V(G-r)$, such that $X$ is a subset of the union of $d-1\leq d+h-2$ bags of $\DD$, and $G-r-X$ contains no element of $\mathcal{F}$. 

Let $G_1,\dots,G_p$ be the components of $G-r-X$ that contain a vertex from $R$. 
By construction of $X$, the graph $G_i$ contains no $T_{h-1,d+1}$ minor. By induction, 
$G_i$ has a partition $\PP_i$ such that: 
\begin{itemize}
\item each part of $\PP_i$ is a subset of the union of at most $(d+1)+(h-1)-2=d+h-2$ bags of $\DD$, and 
\item $G_i/\PP_i$ has a path-decomposition $\mathcal{B}_i$ of width at most $2h-3$.
\end{itemize}

Let $Z:= V(G-r-X)\setminus V(G_1\cup\dots\cup G_p)$; that is, $Z$ is the set of vertices of all components of $G-r-X$ that have no vertex in $R$.

Consider a vertex $v\in X$. By the minimality of $X$, the graph $G-r-(X\setminus\{v\})$ contains a connected subgraph $Y_v$ that contains $v$ and a vertex $r_v\in R$ (and contains a $T_{h-1,d+1}$ minor). Let $P_v$ be a path from $v$ to $r_v$ in $Y_v$ plus the edge $r_vr$. So $P_v-\{v,r\}$ is contained in some $G_i$, and thus $P_v$ avoids $Z$. So 
$\cup\{P_v:v\in X\}$ is a connected subgraph in $G-Z$. Let $G'$ be obtained from $G$ by contracting $\cup\{P_v:v\in X\}$ into a vertex $r'$, and deleting any remaining vertices not in $Z$. So $V(G')=\{r'\}\cup Z$. Since $G'$ is a minor of $G$, the graph $G'$ is  $T_{h,d}$-minor-free. Let $\DD'$ be the tree-decomposition of $G'$ obtained from $\DD$ by replacing each instance of each vertex in $\cup\{P_v:v\in X\}$ by $r'$ then removing the other vertices in $V(G)\setminus V(G')$.  
Observe that for every bag $B$ in $\mathcal{D'}$, we have $B-\{r'\}$ contained in some bag of $\DD$. 
By induction, $G'$ has a partition $\PP'$ such that: 
\begin{itemize}
\item each part of $\PP'$ is a subset of the union of at most $d+h-2$ bags of $\DD'$, 
\item $\{r'\}\in\PP'$, and
\item $G'/\PP'$ has a path-decomposition $\BB'$ of width at most $2h-1$ in which the first bag contains $\{r'\}$. 
\end{itemize}

Let $\PP:=\{\{r\}\}\cup\{X\}\cup\PP_1\cup\dots\cup\PP_p \cup (\PP'\setminus\{\{r'\}\})$. Then $\PP$ is a partition of $G$ such that each part is a subset of the union of at most $d+h-2$ bags of $\DD$. 
Let $\mathcal{B}$ be a sequence of subsets of vertices of $G/\PP$ obtained from the concatenation of $\mathcal{B}_1,\dots,\mathcal{B}_p,$ and $\BB'$ by adding $\{r\}$ and $X$ to every bag that comes from $\mathcal{B}_1,\dots,\mathcal{B}_p$ and replacing $\{r'\}$ by $X$. 
Now we argue that $\mathcal{B}$ is a path-decomposition of $G/\PP$. 
Indeed, each part of $\PP$ is contained in consecutive bags of $\BB$, specifically $\{r\}$ and $X$ are added to all bags across $\BB_1,\ldots,\BB_p$, and $X$ is in the first bag of $\BB'$. 
Since $G_1,\ldots,G_p$ are components of $G-r-X$,
the neighbourhood in $G/\PP$ of a part in $\PP_i$ is contained in $\PP_i\cup\{\{r\}, X\}$.
Note also that the neigbourhood of $\{r\}$ in $G/\PP$ is contained in $\PP_1\cup\cdots \cup \PP_p\cup \{X\}$. 
It follows that $\BB$ is a path-decomposition of $G/\PP$. By construction, the width of $\BB$ is at most $2h-1$ and the first bag contains $\{r\}$, as required. 
\end{proof}

We now turn to the proof of \cref{ExcludedPath}. We in fact prove a stronger result in terms of tree-depth. A forest is \defn{rooted} if each component has a root vertex (which defines the ancestor relation). The \defn{vertex-height} of a rooted forest $F$ is the maximum number of vertices in a root--leaf path in $F$. The \defn{closure} of a rooted forest $F$ is the graph $G$ with $V(G):=V(F)$  with $vw\in E(G)$ if and only if $v$ is an ancestor of $w$ (or vice versa). The \defn{tree-depth} of a graph $G$ is the minimum vertex-height of a rooted forest $F$ such that $G$ is a subgraph of the closure of $F$. It is well-known and easily seen that $\pw(G)\leq\td(G)-1$ for every graph $G$. Thus, the following lemma implies \cref{ExcludedPath} since every $P_{2h+1}$-minor-free graph $G$ has $\tw(G)\leq\pw(G)\leq 2h-1$ by the result of \citet{BRST91} mentioned in \cref{Introduction}.

\begin{lem}
For any $h,k\in\NN$, for every graph $G$ with no path on $2h+1$ vertices, for every tree-decomposition $\DD$ of $G$, the graph $G$ has a partition $\PP$ such that $\td(G/\PP) \leq h$ and each part of $\PP$ is a subset of at most two bags of $\DD$. 
\end{lem}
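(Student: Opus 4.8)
The plan is to mirror the inductive structure of the preceding lemma, but now inducting on $h$ alone (with a secondary induction on $|V(G)|$), and using tree-depth in place of pathwidth. The key structural fact about paths is the following: a graph has no path on $2h+1$ vertices precisely when it has no $P_{2h+1}$ as a minor, and excluding a long path bounds the ``radial depth'' of the graph from any root. So I would fix a vertex $r$ of $G$ (choosing it inside the induction rather than taking it as input, since the statement does not prescribe a root), let $R$ be the neighbourhood of $r$, and study the components of $G-r$. The crucial observation is that any connected subgraph of $G-r$ meeting $R$ that contains a long path, when combined with $r$, would produce an even longer path in $G$. Concretely, if some component of $G-r$ touching $R$ contained a path on $2(h-1)+1 = 2h-1$ vertices, I would try to extend it through $r$ to a path on $2h+1$ vertices, contradicting the hypothesis.

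First I would set up the base case $h=0$ (or $h=1$), where no path on $1$ (resp.\ $3$) vertex forces $G$ to be essentially trivial, giving a partition into at most one part with $\td(G/\PP)\leq 0$ (resp.\ $\leq 1$). For the inductive step, I would like to argue that the components of $G-r$ that meet $R$ each have \emph{no path on $2(h-1)+1$ vertices}, so that induction applies to each with parameter $h-1$, yielding partitions $\PP_i$ with $\td(G_i/\PP_i)\leq h-1$, each part living in at most two bags of $\DD$. I would then form the overall partition by taking $\{r\}$ together with the $\PP_i$'s (and handling the components of $G-r$ that avoid $R$ separately, analogously to the $Z$ set in the previous lemma), and build a rooted forest for $G/\PP$ by making $\{r\}$ an ancestor of the roots of the forests witnessing $\td(G_i/\PP_i)\leq h-1$. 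This adds one to the vertex-height, giving $\td(G/\PP)\leq h$.

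The main obstacle, which I expect to be genuinely delicate, is the \emph{two-bags} requirement on the parts together with ensuring $\{r\}$ really does dominate all the component-partitions in the tree-depth sense. Unlike the pathwidth argument, there is no separator set $X$ appearing in the tree-depth bound to absorb boundary effects, so I cannot simply throw a set of ``hitting'' vertices into every bag. I would therefore need each part to lie in at most two bags of $\DD$ directly from the inductive hypothesis applied to the $G_i$, and I must verify that restricting $\DD$ to a component $G_i$ still yields a valid tree-decomposition whose bags are (sub-bags of) bags of $\DD$, so the ``at most two bags'' property is preserved without inflation. The tension is that in the previous lemma the analogue of $X$ was crucial for keeping the quotient's pathwidth small; here I am hoping that the linear (path) structure is simple enough that no such separator is needed and a single root vertex $r$ per level suffices, because excluding a long path already forces the components to shrink in path-length by the full amount $2$ (one for entering $r$, one for the edge to $R$) at each step.

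A secondary technical point is correctly bounding the path-length drop: I must check that a component of $G-r$ meeting $R$ cannot contain a path on $2h-1$ vertices, since such a path plus the edge to $r$ would only give $2h$ vertices, not $2h+1$. This suggests the correct inductive parameter may be that the components have no path on $2h-1$ vertices, i.e.\ I should induct with a ``radius-like'' bookkeeping where the relevant quantity decreases by one per level of the rooted forest; I would reconcile this with the tree-depth target $\leq h$ by pairing two consecutive forest levels per unit drop in path-length, or equivalently by choosing $r$ via a BFS-style layering so that the eccentricity from $r$ is at most $2h-1$ and each part sits in two consecutive BFS layers. I expect the cleanest route is to phrase the induction so that $\td(G/\PP)\le h$ comes out exactly, and to confirm the base case and the off-by-one in the path-length bound are consistent.
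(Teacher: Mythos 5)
There is a genuine gap, and it is exactly at the point you flagged as ``genuinely delicate'': the single-root inductive step does not work, because deleting one vertex reduces the longest-path bound by at most \emph{one} vertex, while your induction needs it to drop by \emph{two}. Concretely, your claim that every component of $G-r$ meeting $R$ has no path on $2h-1$ vertices is false: take $G=P_{2h}$ (which has no path on $2h+1$ vertices) and $r$ an endpoint; then $G-r$ is a single component meeting $R$ that is itself a path on $2h-1$ vertices. Worse, even when a component does contain a path $Q$ from the long path $P$ to $r$, the path $P$ need not end near $R$, and gluing the longer half of $P$ to $Q$ only yields about $h+1$ vertices, not $2h+1$, so no contradiction arises. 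Your proposed repairs do not save the bound: BFS layering or ``pairing two consecutive forest levels per unit drop'' costs two levels of tree-depth per unit decrease in path length, giving $\td(G/\PP)\approx 2h$ rather than $h$.

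The missing idea is to delete a \emph{set} $S$ that is the union of at most two bags of $\DD$, rather than a single vertex, and this is precisely what the ``at most two bags'' allowance in the statement is engineered for. The paper first shows that a connected graph with no path on $2h+1$ vertices cannot contain three vertex-disjoint paths $P^{(1)},P^{(2)},P^{(3)}$ each on $2h-1$ vertices: contracting them, connectivity yields a connecting path $Q$ of length at least $2$ between two of them, internally avoiding both, and concatenating the longer half of $P^{(1)}$ (at least $h$ vertices), $Q$'s interior, and the longer half of $P^{(2)}$ (at least $h$ vertices) produces a path on at least $2h+1$ vertices, a contradiction. Then \cref{HittingSet} applied with $\ell=3$ to the family of all paths on $2h-1$ vertices yields a set $S$ consisting of at most $\ell-1=2$ bags of $\DD$ such that $G-S$ is $P_{2h-1}$-free; induction with parameter $h-1$ applies to $G-S$, and adding $\{S\}$ as a single part (legal, since $S$ lies in two bags) as the root of the tree-depth forest gives $\td(G/\PP)\leq\td((G-S)/\PP')+1\leq h$. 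Note that your instinct that ``no separator set is needed'' for paths was the wrong turn: the separator $S$ is essential here, but unlike in the pathwidth lemma it is absorbed as one part of the partition rather than added to every bag, which is exactly why the two-bag budget per part appears in the statement.
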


\begin{proof}
We proceed by induction on $h$. For $h=1$, $G$ is the disjoint union of copies of $K_1$ and $K_2$. Let $\PP$ be the partition of $G$ where the vertex-set of each component of $G$ is a part of $\PP$. Thus $E(G/\PP)=\emptyset$ and $\td(G/\PP)=1$. Each part is a subset of one bag of $\DD$. 

Now assume $h \geq 2$ and the claim holds for $h-1$. We may assume that $G$ is connected. 
Suppose $G$ contains three vertex-disjoint paths, $P^{(1)}$, $P^{(2)}$ and $P^{(3)}$, each with $2h-1$ vertices. Let $G'$ be the graph obtained by contracting each path  $P^{(i)}$ into a vertex $v_i$. Since $G'$ is connected, there is a $(v_i,v_j)$-path of length at least $2$ in $G'$ for some distinct $i,j\in \{1,2,3\}$. Without loss of generality, $i=1$ and $j=2$. So there exist vertices $u\in V(P^{(1)})$ and $v\in V(P^{(2)})$ together with a $(u,v)$-path $Q$ of length at least $2$ in $G$ that internally avoids $P^{(1)} \cup P^{(2)}$. Let $x$ be the endpoint of $P^{(1)}$ that is furthest from $u$ (on $P^{(1)}$) and let $y$ be the endpoint of $P^{(2)}$ that is furthest from $v$ (on $P^{(2)}$). Then $(xP^{(1)}uQvP^{(2)}y)$ is a path with at least $2h+1$ vertices, a contradiction.

Now assume that $G$ contains no three vertex-disjoint paths with $2h-1$ vertices. By \cref{HittingSet}, there is a set $S\subseteq V(G)$ consisting of at most two bags of $\DD$ such that $G-S$ is $P_{2h-1}$-free. By induction, $G-S$ has a partition $\PP'$ such that $\td((G-S)/\PP') \leq h-1$ and each part of $\PP'$ is a subset of at most two bags of $\DD$. Let $\PP:=\PP'\cup \{S\}$. Then $\PP$ is the desired partition of $G$ since $\td(G/\PP)\leq 
\td( (G-S)/\PP') +1 \leq h$.
\end{proof}

We turn to the proof of Proposition~\ref{TreeLowerBound}. 
It is a strengthening of a similar result by \citet[Lemma~13]{NSSW19}.

\TreeLowerBound*

\begin{proof}
We proceed by induction on $h\geq 1$. First consider the base case $h=1$. Let $G$ be a path on $n= c+1$ vertices. Thus $G$ is $T_{1,3}$-minor-free. Suppose that $G$ is contained in $H\boxtimes K_c$. Since $n>c$ and $G$ is connected, $|E(H)|\geq 1$ and $H$ has a clique of size 2, as desired. 

Now assume $h\geq 2$ and the result holds for $h-1$. Let $t_0:=|V(T_{h-1,3})|$. By induction,  there is a $T_{h-1,3}$-minor-free graph $G_0$, such that for every graph $H$, if $G_0$ is contained in $H\boxtimes K_{c}$, then $H$ has a clique of size $2h-2$. Let $G$ be obtained from a path $P$ of length $c+1$ as follows: for each edge $vw$ of $P$, add $2c$ copies of $G_0$ complete to $\{v,w\}$. 

Suppose for the sake of contradiction that $G$ contains a $T_{h,3}$-model. Let $X$ be the branch set corresponding to the root of $T_{h,3}$. So $G-X$ contains 
three pairwise disjoint subgraphs $Y_1,Y_2,Y_3$, each containing a $T_{h-1,3}$-minor. Each $Y_i$ intersects $P$, otherwise $Y_i$ is contained in some component of $G-P$ which is a copy of $G_0$. By the construction of $G$, each $Y_i$ intersects $P$ in a subpath $P_i$. Without loss of generality, $P_1,P_2,P_3$ appear in this order in $P$. Since each component of $G-P$ is only adjacent to an edge of $P$, no component of $G-P_2$ is adjacent to both $Y_1$ and $Y_3$. In particular, $X$ is not adjacent to both $Y_1$ and $Y_3$, which is a contradiction. Thus $G$ is $T_{h,3}$-minor-free.

Now suppose that $G$ is contained in $H\boxtimes K_c$. Let $\PP$ be the corresponding $H$-partition of $G$. Since $|V(P)|>c$ there is an edge $v_1v_2$ of $P$ with $v_i\in Q_i$ for some distinct parts $Q_1,Q_2\in\PP$. At most $c-1$ of the copies of $G_0$ attached to $v_1v_2$ intersect $Q_1$, and at most $c-1$ of the copies of $G_0$ attached to $v_1v_2$ intersect $Q_2$. Thus some copy of $G_0$ attached to $v_1v_2$ avoids $Q_1\cup Q_2$. Let $H_0$ be the subgraph of $H$ induced by those parts that intersect this copy of $G_0$. So neither $Q_1$ nor $Q_2$ is in $H_0$. By induction, $H_0$ has a clique $C_0$ of size $2(h-1)$. Since $G_0$ is complete to $v_1v_2$, we have that $C_0\cup\{Q_1,Q_2\}$ is a clique of size $2h$ in $H$, as desired. 
\end{proof}

{\fontsize{10pt}{11pt}\selectfont
\bibliographystyle{DavidNatbibStyle}
\bibliography{DavidBibliography}}

\def\soft#1{\leavevmode\setbox0=\hbox{h}\dimen7=\ht0\advance \dimen7
  by-1ex\relax\if t#1\relax\rlap{\raise.6\dimen7
  \hbox{\kern.3ex\char'47}}#1\relax\else\if T#1\relax
  \rlap{\raise.5\dimen7\hbox{\kern1.3ex\char'47}}#1\relax \else\if
  d#1\relax\rlap{\raise.5\dimen7\hbox{\kern.9ex \char'47}}#1\relax\else\if
  D#1\relax\rlap{\raise.5\dimen7 \hbox{\kern1.4ex\char'47}}#1\relax\else\if
  l#1\relax \rlap{\raise.5\dimen7\hbox{\kern.4ex\char'47}}#1\relax \else\if
  L#1\relax\rlap{\raise.5\dimen7\hbox{\kern.7ex
  \char'47}}#1\relax\else\message{accent \string\soft \space #1 not
  defined!}#1\relax\fi\fi\fi\fi\fi\fi}
\begin{thebibliography}{8}
\providecommand{\natexlab}[1]{#1}
\providecommand{\msn}[1]{MR:\,\href{http://www.ams.org/mathscinet-getitem?mr=MR{#1}}{#1}}
\providecommand{\ZBL}[1]{Zbl:\,\href{https://www.zentralblatt-math.org/zmath/en/search/?q=an:#1}{#1}}
\providecommand{\url}[1]{\texttt{#1}}
\providecommand{\urlprefix}{}
\expandafter\ifx\csname urlstyle\endcsname\relax
  \providecommand{\doi}[1]{doi:\discretionary{}{}{}#1}\else
  \providecommand{\doi}{doi:\discretionary{}{}{}\begingroup
  \urlstyle{rm}\Url}\fi

\bibitem[{Bienstock et~al.(1991)Bienstock, Robertson, Seymour, and
  Thomas}]{BRST91}
\textsc{Dan Bienstock, Neil Robertson, Paul~D. Seymour, and Robin Thomas}.
\newblock \href{https://doi.org/10.1016/0095-8956(91)90068-U}{Quickly excluding
  a forest}.
\newblock \emph{J. Combin. Theory Ser. B}, 52(2):274--283, 1991.

\bibitem[{Campbell et~al.(2022)Campbell, Clinch, Distel, Gollin, Hendrey,
  Hickingbotham, Huynh, Illingworth, Tamitegama, Tan, and Wood}]{UTW}
\textsc{Rutger Campbell, Katie Clinch, Marc Distel, J.~Pascal Gollin, Kevin
  Hendrey, Robert Hickingbotham, Tony Huynh, Freddie Illingworth, Youri
  Tamitegama, Jane Tan, and David~R. Wood}.
\newblock \href{http://arxiv.org/abs/2206.02395}{Product structure of graph
  classes with bounded treewidth}.
\newblock 2022, arXiv:2206.02395.

\bibitem[{Diestel(1995)}]{Diestel95}
\textsc{Reinhard Diestel}.
\newblock \href{https://doi.org/10.1017/S0963548300001450}{Graph minors. {I}.
  {A} short proof of the path-width theorem}.
\newblock \emph{Combin. Probab. Comput.}, 4(1):27--30, 1995.

\bibitem[{Diestel(2018)}]{Diestel5}
\textsc{Reinhard Diestel}.
\newblock Graph theory, vol. 173 of \emph{Graduate Texts in Mathematics}.
\newblock Springer, 5th edn., 2018.

\bibitem[{Dujmovi{\'c} et~al.(2020)Dujmovi{\'c}, Joret, Micek, Morin, Ueckerdt,
  and Wood}]{DJMMUW20}
\textsc{Vida Dujmovi{\'c}, Gwena\"{e}l Joret, Piotr Micek, Pat Morin, Torsten
  Ueckerdt, and David~R. Wood}.
\newblock \href{https://doi.org/10.1145/3385731}{Planar graphs have bounded
  queue-number}.
\newblock \emph{J. ACM}, 67(4):\#22, 2020.

\bibitem[{Illingworth et~al.(2022)Illingworth, Scott, and Wood}]{ISW}
\textsc{Freddie Illingworth, Alex Scott, and David~R. Wood}.
\newblock \href{http://arxiv.org/abs/2104.06627}{Product structure of graphs
  with an excluded minor}.
\newblock 2022, arXiv:2104.06627.

\bibitem[{Norin et~al.(2019)Norin, Scott, Seymour, and Wood}]{NSSW19}
\textsc{Sergey Norin, Alex Scott, Paul Seymour, and David~R. Wood}.
\newblock \href{https://doi.org/10.1007/s00493-019-3848-z}{Clustered colouring
  in minor-closed classes}.
\newblock \emph{Combinatorica}, 39(6):1387--1412, 2019.

\bibitem[{Robertson and Seymour(1983)}]{RS-I}
\textsc{Neil Robertson and Paul Seymour}.
\newblock \href{https://doi.org/10.1016/0095-8956(83)90079-5}{Graph minors.
  {I}. {E}xcluding a forest}.
\newblock \emph{J. Combin. Theory Ser. B}, 35(1):39--61, 1983.

\end{thebibliography}

\end{document}